\newtheorem{proposition}{Proposition}
\newtheorem*{theorema}{Theorem}
\newtheorem{lemma}{Lemma}
\theoremstyle{remark}
\newtheorem*{remack}{Remarks and Acknowledgments}
\newtheorem*{remark}{Remark}
\def\re{\operatorname{Re}}
\def\N{\mathbb{N}}
\def\R{\mathbb{R}}
\def\C{\mathbb{C}}
\def\Z{\mathbb{Z}}
\numberwithin{equation}{section}
\title{On postsingularly finite exponential maps}
\author{Walter Bergweiler}
\date{}
\begin{document}
\maketitle
\begin{abstract}
We consider parameters $\lambda$ for which $0$ is preperiodic under the 
map $z\mapsto\lambda e^z$. Given $k$ and $l$, let $n(r)$ be the 
number of $\lambda$ satisfying $0<|\lambda|\leq r$ such that 
$0$ is mapped after $k$ iterations to a periodic point of period $l$.
We determine the asymptotic behavior of $n(r)$ as $r$ tends to $\infty$.
\end{abstract}

\section{Introduction and main result} \label{sec1}
Let $E_\lambda(z)=\lambda e^z$ where $\lambda\in\C\backslash\{0\}$.
We are interested in parameters $\lambda$ for which $0$ is preperiodic.
Note that $0$ is the only singularity of the inverse function of~$E_\lambda$.
Functions for which all singularities of the inverse are preperiodic are 
called {\em postsingularly finite}.
The term \emph{Misiurewicz map} is also used for such functions.
We do not discuss their role in complex dynamics here, but refer
to \cite{Benini2011,Devaney1997,Devaney2005,Hubbard2009,Jarque2011,Laubner,SZ} as a
sample of papers dealing with postsingularly finite exponential maps.

For $k,l\in\N$ we thus consider parameters $\lambda$ such that 
\begin{equation}\label{b1}
E_\lambda^k(0)=E_\lambda^{k+l}(0)
\end{equation}
while
\begin{equation}\label{b2}
E_\lambda^i(0)\neq E_\lambda^j(0)
\quad\text{for}\ 0<i<j<k+l.
\end{equation}
We denote by $n(r)$ the number of all $\lambda$ contained in 
$\{z\colon 0<|z|\leq r\}$ which satisfy~\eqref{b1} and~\eqref{b2}.
If $k=l=1$, then the set of all $\lambda\neq 0$ satisfying~\eqref{b1} and~\eqref{b2} is equal to
$\{2\pi i m\colon m\in\Z\backslash\{0\}\}$.
Thus $n(r)\sim r/\pi$ as $r\to\infty$.

For $m\in\N$ we put $f_m(z)=E_z^m(0)$. Thus $f_1(z)=z$ and $f_{m+1}(z)=ze^{f_m(z)}$.
\begin{theorema}
Let $k,l$ and $n(r)$ be as above. If $k+l\geq 3$, then
\begin{equation}\label{b3}
n(r)\sim \frac{1}{\sqrt{2\pi^3}} f_{k+l-1}(r)\sqrt{f_{k+l-2}(r)}
\quad\text{as}\ r\to\infty.
\end{equation}
\end{theorema}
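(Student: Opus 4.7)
My plan is to transform condition \eqref{b1} into a transcendental equation amenable to a Jensen zero-counting estimate, and then evaluate the resulting boundary integral by a saddle-point / phase-averaging analysis. Since $\lambda\ne 0$, the recursion $f_{m+1}(\lambda)=\lambda e^{f_m(\lambda)}$ immediately rewrites \eqref{b1} as $G(\lambda)\in 2\pi i\Z$, where
\[
G(\lambda):=f_{k+l-1}(\lambda)-f_{k-1}(\lambda).
\]
So I want to count the zeros in $\{0<|\lambda|\le r\}$ of the entire function $H(\lambda)-1$, with $H:=e^{G}$. The non-degeneracy requirement \eqref{b2} excludes only those $\lambda$ which satisfy \eqref{b1} for some $(k',l')$ with $k'+l'<k+l$; by induction on $k+l$ these are counted by $n_{k',l'}(r)$, which grows much more slowly than the target asymptotic, since $f_{k+l-1}$ dominates every $f_m$ with $m<k+l-1$.

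Next I would apply Jensen's formula to $H-1$:
\[
\int_0^r \frac{n(t)}{t}\,dt = \frac{1}{2\pi}\int_0^{2\pi}\log\bigl|H(re^{i\theta})-1\bigr|\,d\theta + O(\log r).
\]
The elementary estimate $\bigl|\log|e^{w}-1|-(\re w)^{+}\bigr|=O(1)$, valid away from $w\in 2\pi i\Z$, combined with $|f_{k-1}(\lambda)|=o(|f_{k+l-1}(\lambda)|)$ on most of $|\lambda|=r$, reduces the right-hand side (up to a negligible error) to
\[
J(r):=\frac{1}{2\pi}\int_0^{2\pi}\bigl(\re f_{k+l-1}(re^{i\theta})\bigr)^{+}\,d\theta.
\]

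The core of the proof is a saddle-point analysis of $J(r)$. Using $f_{m+1}'/f_{m+1}=1/z+f_m'$ and a Taylor expansion near the peak at $\theta=0$,
\[
|f_{k+l-1}(re^{i\theta})|\sim f_{k+l-1}(r)\,e^{-B(r)\theta^{2}/2},\qquad \arg f_{k+l-1}(re^{i\theta})\sim \theta\,rf_{k+l-2}'(r),
\]
with $B(r):=rf_{k+l-2}'(r)+r^{2}f_{k+l-2}''(r)$. The decisive observation is that within the modulus window $|\theta|\lesssim B(r)^{-1/2}$, the argument already completes many full rotations, since $rf_{k+l-2}'(r)/\sqrt{B(r)}\sim\sqrt{f_{k+l-2}(r)}\to\infty$. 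Replacing the rapidly oscillating factor $(\cos(\arg f_{k+l-1}))^{+}$ by its mean value $1/\pi$ and carrying out the Gaussian integration gives
\[
J(r)\sim \frac{1}{\pi}\cdot\frac{f_{k+l-1}(r)}{\sqrt{2\pi B(r)}}.
\]
Recovering $n(r)\sim rJ'(r)$, and noting that $f_{k+l-1}'\sim f_{k+l-1}f_{k+l-2}'$ dominates the derivative of $B(r)^{-1/2}$, I obtain
\[
n(r)\sim \frac{rf_{k+l-1}(r)f_{k+l-2}'(r)}{\pi\sqrt{2\pi B(r)}}\sim \frac{f_{k+l-1}(r)\sqrt{f_{k+l-2}(r)}}{\sqrt{2\pi^{3}}},
\]
which is \eqref{b3}.

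The most delicate step will be the rigorous justification of the phase-averaging heuristic, in particular controlling the tails $|\theta|\gg B(r)^{-1/2}$ (where $|f_{k+l-1}|$ is small but $\arg f_{k+l-1}$ still oscillates) and the exceptional thin set of $\theta$ where $f_{k+l-1}(re^{i\theta})$ is abnormally close to $2\pi i\Z$, causing the uniform approximation $\log|e^{w}-1|=(\re w)^{+}+O(1)$ to break down; one also has to argue that subtracting $f_{k-1}$ from $f_{k+l-1}$ genuinely produces only a negligible perturbation. A secondary technicality is the borderline case $k+l=3$, where $f_{k+l-3}=f_0\equiv 0$ makes the quadratic term in $B(r)$ vanish, but the linear term $rf_1'(r)=r$ still gives $rf_{k+l-2}'(r)/\sqrt{B(r)}=\sqrt{r}=\sqrt{f_1(r)}$, so a direct verification confirms that \eqref{b3} continues to hold.
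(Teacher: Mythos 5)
Your strategy, rewriting $f_k(\lambda)=f_{k+l}(\lambda)$ as $f_{k+l-1}(\lambda)-f_{k-1}(\lambda)\in 2\pi i\Z$ and counting zeros of $e^G-1$ via Jensen's formula, is a natural reformulation, and the saddle-point calculation at its core (with $B(r)$, the phase-averaging of $\cos^+$, and the Gaussian integral) matches the paper's computation in Proposition~\ref{prop2} essentially step for step. However, two places where you wave your hands are exactly where the paper does real work, and as written they are gaps rather than technicalities.

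First, Jensen's formula gives you $N(r)=\frac{1}{2\pi}\int\log|H-1|\,d\theta+O(\log r)$, but you need $\frac{1}{2\pi}\int\log|H-1|\,d\theta\sim\frac{1}{2\pi}\int(\re G)^+\,d\theta$. The upper bound is easy ($\log|e^w-1|\le(\re w)^++\log 2$), but the lower bound requires showing that $\frac{1}{2\pi}\int\log^-|H-1|\,d\theta=m\bigl(r,1/(H-1)\bigr)$ is $o(T(r,H))$. This is precisely the statement that $1$ is not a deficient value for $H$, i.e.\ Nevanlinna's second fundamental theorem. Bare-hands Jensen cannot give this; the set of $\theta$ where $G(re^{i\theta})$ is close to $2\pi i\Z$ could a priori contribute a term of order $T(r,H)$. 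You label this a ``delicate step'' but offer no mechanism for it. In the paper this is Lemma~\ref{lemma1} (Hayman's Theorem~2.5, a form of the second fundamental theorem with small functions), applied with targets $0,\,f_k,\,\infty$; it is the entire content of Proposition~\ref{prop1}. Your route hasn't avoided the second main theorem -- it has merely concealed where it is needed.

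Second, and more seriously because you do not flag it at all: after establishing $N(r)=\int_0^r n(t)t^{-1}\,dt\sim g(r)$ for an explicit $g$, you write ``Recovering $n(r)\sim rJ'(r)$'' as though the derivative of an asymptotic equivalence were free. It is not: $N\sim g$ does not imply $N'\sim g'$ without a regularity hypothesis. Passing from $N$ to $n$ is a Tauberian step, and the paper spends all of Section~\ref{sec4} on it -- Lemma~\ref{lemma3} (to show the Borel exceptional set is bounded, so the asymptotic for $N$ holds for \emph{all} large $r$) and then London's Lemma~\ref{lemma4} (a reverse l'Hospital requiring convexity of $N(e^x)$ and a bounded logarithmic convexity quotient for $g(e^x)$). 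Without something of this kind, the final line of your argument, and hence the theorem itself, does not follow from what precedes it. Both gaps are fixable by exactly the tools the paper uses, but as written the proposal omits the two genuinely non-computational ingredients of the proof.
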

The theorem will be proved using Nevanlinna theory.
We refer to~\cite{GO,Hayman1964} for the terminology and basic results of 
this theory. In particular, $T(r,f)$ denotes the Nevanlinna characteristic of a
meromorphic function $f$.

Nevanlinna theory makes it natural to consider 
\begin{equation}\label{b4}
N(r)=\int_0^r \frac{n(t)}{t} dt
\end{equation}
besides $n(r)$.

The theorem will be a consequence of the following two propositions.
\begin{proposition}\label{prop1}
Let $k,l$ and $N(r)$ be as above. Then
there exists a subset $E$ of $(0,\infty)$ which has finite measure such that
\begin{equation}\label{b5}
N(r)\sim T(r,f_{k+l})
\quad\text{as}\ r\to\infty,\ r\notin E.
\end{equation}
\end{proposition}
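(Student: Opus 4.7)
The plan is to exploit the factorization
\[
f_{k+l}(z)-f_k(z) \;=\; z\, e^{f_{k-1}(z)}\bigl(e^{h(z)}-1\bigr),\qquad h := f_{k+l-1}-f_{k-1},
\]
with the convention $f_0\equiv 0$, so that for $\lambda\neq 0$ equation \eqref{b1} is equivalent to $\phi(\lambda)=1$, where $\phi:=e^h$. Since $\phi$ is entire with neither zeros nor poles, $\overline N(r,\phi) = \overline N(r,1/\phi) = 0$. Applying Nevanlinna's second main theorem to $\phi$ at the three target values $0$, $1$, $\infty$ yields
\[
T(r,\phi) \;\le\; \overline N\bigl(r, 1/(\phi-1)\bigr) + S(r,\phi),
\]
while the first main theorem gives the complementary $\overline N(r, 1/(\phi-1)) \le T(r,\phi)+O(1)$. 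Since $S(r,\phi)=o(T(r,\phi))$ outside a set $E\subset(0,\infty)$ of finite measure, I would conclude that $\overline N(r,1/(\phi-1)) \sim T(r,\phi)$ for $r\notin E$.

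Next I would compare this counting function to $N(r)$. The distinct zeros of $\phi-1$ in $\{0<|z|\le r\}$ are precisely the nonzero $\lambda$ satisfying \eqref{b1}; writing $n^\ast(r)$ for their number, one has $N^\ast(r) = \overline N(r,1/(\phi-1)) + O(\log r)$, the error absorbing the zero of $\phi-1$ at the origin. To pass from $n^\ast$ to $n$, I would note that an elementary case analysis on the orbit of $0$ shows any $\lambda$ satisfying \eqref{b1} but violating \eqref{b2} must satisfy $f_{k'}(\lambda)=f_{k'+l'}(\lambda)$ for some pair $(k',l')$ with $0<k'+l'<k+l$ (its true preperiod or true period being shorter than claimed). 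Hence $n^\ast(r)-n(r)$ is bounded by a finite sum of counting functions of zeros of entire functions $f_{k'}-f_{k'+l'}$, each of which is at most $T(r,f_{k'+l'})+T(r,f_{k'})+O(\log r)$ by the first main theorem.

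To finish, I need two comparisons: $T(r,\phi)\sim T(r,f_{k+l})$ and $T(r,f_{k'+l'})=o(T(r,f_{k+l}))$ whenever $k'+l'<k+l$. The first follows from the inequalities $\re^+f_{k+l-1} \le \re^+ h + \re^+ f_{k-1}$ and its symmetric partner, yielding $T(r,\phi) = m(r,e^{f_{k+l-1}}) + O(T(r,e^{\pm f_{k-1}}))$, together with $m(r,e^{f_{k+l-1}}) = T(r,f_{k+l})+O(\log r)$ (from $f_{k+l}=z\,e^{f_{k+l-1}}$ and the first main theorem) and $T(r,e^{\pm f_{k-1}}) \le T(r,f_k)+O(\log r)$. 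Combined with the hierarchy, this gives $N(r) = N^\ast(r) - o(T(r,f_{k+l})) \sim T(r,\phi) \sim T(r,f_{k+l})$ for $r\notin E$.

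The main obstacle is the growth hierarchy $T(r,f_m)=o(T(r,f_{m+1}))$ for $m\ge 2$, on which everything else rests. The upper bound $T(r,f_{m+1})=m(r,e^{f_m})+O(\log r)$ is immediate; the issue is to show that the mean $\frac{1}{2\pi}\int_0^{2\pi}\re^+ f_m(re^{i\theta})\,d\theta$ grows much faster than $T(r,f_m)=\frac{1}{2\pi}\int_0^{2\pi}\log^+|f_m(re^{i\theta})|\,d\theta$. I expect to obtain this by invoking the Borel--Carath\'eodory inequality — which forces $\max_{|z|=2r}\re f_m$ to be at least a fixed fraction of $M(r,f_m)$ — together with the infinite order of $f_m$ for $m\ge 2$, which guarantees that $\re f_m$ is large on a substantial portion of the circle. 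Once this hierarchy is available, the spurious-zero count and all correction terms are $o(T(r,f_{k+l}))$, and the desired asymptotic follows.
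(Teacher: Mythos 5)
Your route is genuinely different from the paper's and, with one exception, sound. The paper applies the small-target-functions version of the second main theorem (Hayman, Theorem~2.5, cited here as Lemma~\ref{lemma1}) directly to $f_{k+l}$ with the targets $0$, $f_k$, $\infty$, which in one stroke yields $\overline{N}_A(r)=T(r,f_{k+l})+S(r,f_{k+l})$; the spurious solutions of~\eqref{b1} that violate~\eqref{b2} are then absorbed by the first main theorem. You instead factor $f_{k+l}-f_k = z\,e^{f_{k-1}}(e^h-1)$ and apply the \emph{classical} second main theorem (constant targets $0,1,\infty$) to the zero-free auxiliary function $\phi=e^h$, which is a lighter piece of machinery but requires you to separately establish $T(r,\phi)\sim T(r,f_{k+l})$; your bookkeeping with $\re^+h \le \re^+f_{k+l-1}+(\re f_{k-1})^-$ and $T(r,e^{\pm f_{k-1}})\le T(r,f_k)+O(\log r)$ is correct, and your handling of the \eqref{b2}-violators as zeros of $f_{k'+l'}-f_{k'}$ with $k'+l'<k+l$ works. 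Both arguments rest equally on the hierarchy $T(r,f_j)=o(T(r,f_m))$ for $j<m$, and this is where your proposal has a real gap: Borel--Carath\'eodory bounds $\max_{|z|=R}\re f_m$ from below by a fraction of $M(r,f_m)$, but it says nothing about the \emph{mean} $\tfrac{1}{2\pi}\int\re^+f_m\,d\theta = m(r,e^{f_m})$, which is what you actually need to dominate $T(r,f_m)$, and the appeal to ``infinite order'' is not a proof. Rather than reprove it, cite it: the hierarchy is exactly the statement $T(r,g)=o(T(r,f\circ g))$ (take $f=e^z$, $g=f_m$, and note $T(r,f_{m+1})=T(r,e^{f_m})+O(\log r)$), which is~\eqref{c1a} in the paper and is an exercise on p.~54 of Hayman's book; alternatively use~\eqref{c1}, i.e.\ Lemma~2.6 there. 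With that citation in place of your Borel--Carath\'eodory sketch, your proof is complete.
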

We note that this proposition suffices to show that $n(r)\to\infty$ as $r\to \infty$.
This means that given $k,l\in\N$ there exists infinitely parameters $\lambda$ such 
that~\eqref{b1} and~\eqref{b2}
hold.
\begin{proposition}\label{prop2}
Let $m\geq 3$. Then
\begin{equation}\label{b6}
T(r,f_m)\sim \frac{1}{\sqrt{2\pi^3}} 
\frac{f_{m-1}(r)}{\sqrt{f_{m-2}(r)}\prod_{j=1}^{m-3}f_j(r)}.
\end{equation}
\end{proposition}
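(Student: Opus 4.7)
The plan is to express $T(r,f_m)$ as a real integral, localize to a small neighborhood of $\theta=0$, and combine Laplace's method with an averaging argument. Since $f_m$ is entire, $T(r,f_m)=m(r,f_m)=m(r,e^{f_{m-1}})+O(\log r)$, and the $O(\log r)$ term is negligible against the target. Using $\log^+|e^w|=\max(\re w,0)$,
\[
m(r,e^{f_{m-1}})=\frac1{2\pi}\int_{-\pi}^{\pi}\max(u_{m-1}(r,\theta),0)\,d\theta,
\]
where I write $u_j(r,\theta)=\re f_j(re^{i\theta})$ and $v_j(r,\theta)=\im f_j(re^{i\theta})$. The identity $f_{j+1}=ze^{f_j}$ gives the recursion $u_{j+1}=re^{u_j}\cos(\theta+v_j)$ and $v_{j+1}=re^{u_j}\sin(\theta+v_j)$.

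Each $f_j$ has nonnegative Taylor coefficients with $f_j(0)=0$ and $f_j'(0)=1$, so Taylor expansion about $z=r$ yields, for small $|\theta|$,
\[
u_j(r,\theta)=f_j(r)-\tfrac12 a_j(r)\theta^2+O(\theta^4),\quad v_j(r,\theta)=b_j(r)\theta+O(\theta^3),
\]
with $b_j(r)=rf_j'(r)$ and $a_j(r)=rf_j'(r)+r^2f_j''(r)$. Differentiating $f_{j+1}=ze^{f_j}$ gives $b_{j+1}=f_{j+1}(r)(1+b_j)$ and $a_{j+1}=f_{j+1}(r)(a_j+(1+b_j)^2)$, and induction on $j$ shows, as $r\to\infty$,
\[
b_j(r)\sim\prod_{k=1}^{j}f_k(r),\qquad a_j(r)\sim f_j(r)\prod_{k=1}^{j-1}f_k(r)^2.
\]

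Fix a small $\delta>0$ and split the integral at $|\theta|=\delta$. Iterating $|f_{j+1}(re^{i\theta})|=re^{u_j(r,\theta)}$ and using positivity of Taylor coefficients, one obtains, for large $r$ and uniformly on $|\theta|\ge\delta$, a bound $u_{m-2}(r,\theta)\le(1-c)f_{m-2}(r)$ with $c=c(\delta)>0$; hence this range contributes $O(f_{m-1}(r)e^{-cf_{m-2}(r)})$, negligible against the target. On $|\theta|<\delta$ the expansions above give
\[
u_{m-1}(r,\theta)\approx f_{m-1}(r)\exp\bigl(-\tfrac12 a_{m-2}(r)\theta^2\bigr)\cos\bigl((1+b_{m-2}(r))\theta\bigr).
\]
Substituting $\psi=(1+b_{m-2}(r))\theta$ recasts the Gaussian as $\exp(-\tfrac12 s(r)\psi^2)$ with $s(r)=a_{m-2}/(1+b_{m-2})^2\sim 1/f_{m-2}(r)\to 0$, so it varies slowly over periods of $\cos\psi$. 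Fourier expansion of $\psi\mapsto\max(\cos\psi,0)$ (whose mean is $1/\pi$) gives $\int_{\R}\exp(-\tfrac12 s\psi^2)\max(\cos\psi,0)\,d\psi\sim\pi^{-1}\sqrt{2\pi/s}$. Combining with the prefactor $f_{m-1}(r)/(2\pi)$ yields $T(r,f_m)\sim f_{m-1}(r)/\sqrt{2\pi^3\,a_{m-2}(r)}$, which after substituting the asymptotic for $a_{m-2}$ gives \eqref{b6}.

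The principal technical obstacle is controlling the cubic and quartic remainder terms in the expansions of $u_{m-2}$ and $v_{m-2}$ uniformly for $|\theta|\le\delta$, so that they do not disturb the Gaussian saddle of width $\theta\sim 1/\sqrt{a_{m-2}}$; this requires estimates on $r^kf_{m-2}^{(k)}(r)$ for $k\ge3$, which can be extracted by further differentiating $f_{j+1}=ze^{f_j}$. Once the uniform approximation is in place, the averaging step reduces by dominated convergence applied to the Fourier series of $\max(\cos\psi,0)$.
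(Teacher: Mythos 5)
Your plan, final asymptotics, and bookkeeping are essentially the right ones, and they match the paper's: localize the Nevanlinna integral near $\theta=0$, use a quadratic (Gaussian-plus-oscillation) approximation at the saddle, average $\cos^+$, and compute. Your recursions $b_{j+1}=f_{j+1}(1+b_j)$, $a_{j+1}=f_{j+1}(a_j+(1+b_j)^2)$ and the asymptotics $b_j\sim F_j$, $a_j\sim f_jF_{j-1}^2$ (in the paper's notation, your $b_j$ is $a_j^{\text{paper}}f_j$ and $a_j/f_j = b_j^{\text{paper}}+(a_j^{\text{paper}})^2$) are correct and lead to the stated formula, and the reduction of the cosine integral via the average $\tfrac1\pi$ of $\cos^+$ is the same computation as the paper's Lemma~\ref{ec}. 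The differences in route are superficial: you expand $f_j(re^{i\theta})$ directly and track $u_j,v_j$, while the paper expands $\log f_k(re^\tau)$ in the complex variable $\tau$, which is a tidier way to obtain the form $f_k(re^{i\theta})=f_k(r)\exp(ia_k\theta-\tfrac12 b_k\theta^2)(1+o(1))$ directly.

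However, the cut at a \emph{fixed} $\delta>0$ does not close, and this is a genuine gap rather than a deferred technicality. Your approximation
$u_{m-1}(r,\theta)\approx f_{m-1}(r)\exp(-\tfrac12 a_{m-2}\theta^2)\cos((1+b_{m-2})\theta)$
rests on the quadratic/linear Taylor truncations of $u_{m-2},v_{m-2}$ being accurate with error $o(1)$ (not merely $o(u_{m-2})$, since they sit in an exponent). The relevant fourth-order remainder in $u_{m-2}$ is of size comparable to $r^4f_{m-2}^{(4)}(r)\,\theta^4$, and no amount of ``estimating $r^kf_{m-2}^{(k)}$'' makes it $o(1)$ uniformly on a fixed interval $|\theta|\le\delta$: that requires $|\theta|\to 0$ with $r$ at a definite rate. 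Conversely, your far-range bound $u_{m-2}\le(1-c(\delta))f_{m-2}(r)$ is only available for $|\theta|$ bounded away from $0$, so it cannot cover the intermediate regime. The proof therefore needs a shrinking cutoff $\delta(r)\to 0$ chosen so that (i) the Taylor remainder is $o(1)$ on $|\theta|\le\delta(r)$, (ii) $\delta(r)\sqrt{a_{m-2}(r)}\to\infty$ so the full Gaussian mass is captured, and (iii) a ``far'' estimate holds on $\delta(r)\le|\theta|\le\pi$; the paper takes $\delta(r)=F_{k-1}(r)^{-2/5}$ and establishes (i) and (iii) in Lemmas~\ref{lemma6} and~\ref{lemma7}. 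These two lemmas are where the technical weight of the proof lies, and they are precisely what your sketch leaves open under the heading ``principal technical obstacle.'' Replacing the fixed $\delta$ by such a $\delta(r)$ and proving (i) and (iii) would complete your argument.
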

These propositions will be proved in section~\ref{sec2} and~\ref{sec3}, before we show in 
section~\ref{sec4} how the above theorem follows from them.
\begin{remack}
The results of this paper (except for Proposition~\ref{prop3} below)
were presented in two talks in John H.\ Hubbard's seminar 
at Cornell University in the fall of 1988.
They were inspired by a talk by Ben Bielefeld in this seminar about the computation of 
Misiurewicz parameters using the spider algorithm~\cite{Bielefeld1992,Hubbard1994}.
The proof of Proposition~\ref{prop2} given below is a simplified version of the one presented in 
the seminar.

John Hubbard's seminar was my first encounter with complex dynamics.
(The purpose of my stay at Cornell University was
to visit Wolfgang H.~J.\ Fuchs~\cite{Fuchs}, a leading figure in Nevanlinna theory.)
I would like to
take this opportunity -- albeit very belatedly -- to thank John Hubbard and the 
participants of his seminar for igniting my interest in complex dynamics and for helpful discussions.
I~thank Dierk Schleicher for encouraging me to make the results of my talks in this seminar available
-- and I also thank him and Saikat Batabyal for useful comments on this manuscript.
Finally, I remain grateful to the Alexander von Humboldt Foundation for making my stay at Cornell University 
possible by granting me a Feodor Lynen research fellowship.
\end{remack}

\section{Proof of Proposition~\ref{prop1}}  \label{sec2}
For a meromorphic function $f$ and $a\in\C$  or -- more generally -- a meromorphic function $a$ 
satisfying $T(r,a)=o(T(r,f))$, a so-called \emph{small} function,
we denote by $\overline{n}(r,a,f)$ the number
zeros of $f-a$ in the disk $\{z\colon |z|\leq r\}$. Here we ignore multiplicities; that is,
multiple zeros are counted only once.
(The notation $n(r,a,f)$ is used in Nevanlinna theory when multiplicities are counted.)
One may also take $a=\infty$, in which case we count the poles of $f$.

As usual in Nevanlinna theory, we put
\begin{equation}\label{cy}
\overline{N}(r,a,f) = \int_0^r \frac{\overline{n}(t,a,f)-\overline{n}(0,a,f)}{t}dt +\overline{n}(0,a,f)\log r
\end{equation}
and we denote by $S(r,f)$ any quantity that satisfies 
$S(r,f)=o(T(r,f))$ as $r\to\infty$, possibly outside some exceptional set of finite measure.

The following result~\cite[Theorem~2.5]{Hayman1964}
is a simple consequence of Nevanlinna's second fundamental theorem.
\begin{lemma}\label{lemma1}
Let $f$ be a meromorphic function and let $a_1,a_2,a_3$ be distinct small functions
(or constants in $\C\cup\{\infty\}$).  Then
\begin{equation}\label{cx}
T(r,f)\leq \sum_{j=1}^3 \overline{N}(r,a_j,f)+S(r,f).
\end{equation}
\end{lemma}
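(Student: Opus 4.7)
The plan is to reduce the claim to the classical form of Nevanlinna's second fundamental theorem (which is stated for the three constant targets $0, 1, \infty$) via a cross-ratio substitution. Concretely, I would introduce
\[
g = \frac{(f-a_1)(a_2-a_3)}{(f-a_3)(a_2-a_1)},
\]
a M\"obius transformation of $f$ with small-function coefficients, chosen so that the three conditions $f=a_1, f=a_2, f=a_3$ translate into $g=0, g=1, g=\infty$ respectively.

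First I would verify that $T(r,g)=T(r,f)+S(r,f)$. Since $g$ is obtained from $f$ by a M\"obius transformation whose coefficients lie in the field generated by $a_1,a_2,a_3$, and since $f$ can be recovered from $g$ by the inverse M\"obius transformation, the two characteristics agree up to an error bounded by $\sum_j T(r,a_j)$, which is $S(r,f)$ by hypothesis. In particular $S(r,g)$ and $S(r,f)$ are interchangeable.

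Next I would apply the classical Nevanlinna second fundamental theorem to $g$ with the three constant targets $0,1,\infty$, obtaining
\[
T(r,g)\leq \overline{N}(r,0,g)+\overline{N}(r,1,g)+\overline{N}(r,\infty,g)+S(r,g).
\]
Then I would bound each of these three counting functions by the corresponding $\overline{N}(r,a_j,f)$ plus an $S(r,f)$ error. Away from zeros and poles of $a_1,a_2,a_3$ and zeros of $a_i-a_j$, a zero of $g$ comes only from $f=a_1$, a zero of $g-1$ only from $f=a_2$, and a pole of $g$ only from $f=a_3$. The exceptional points contribute a counting function of order $O\bigl(\sum_j T(r,a_j)+\sum_{i\neq j}T(r,a_i-a_j)\bigr)$, and this is $S(r,f)$ since each $a_j$ is small.

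Combining the three steps and absorbing $S(r,g)$ into $S(r,f)$ yields the desired inequality. The main technical obstacle is the third step: carefully tracking the exceptional points at which the cross-ratio construction degenerates (zeros of $a_2-a_1$ and $a_2-a_3$, and zeros and poles of the $a_j$) and checking that in the reduced counting function $\overline{N}$, where multiplicities are ignored, their contributions are indeed absorbed into $S(r,f)$. This is routine once one invokes the first main theorem for each of the finitely many small functions involved.
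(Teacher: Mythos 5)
The paper gives no proof of this lemma; it simply cites Theorem~2.5 of Hayman's \emph{Meromorphic Functions} as a known consequence of the second fundamental theorem. Your cross-ratio reduction to the classical truncated second fundamental theorem for the targets $0,1,\infty$ is exactly the argument appearing in that reference (due to Nevanlinna), and your outline is correct.
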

We remark that Yamanoi~\cite{Yamanoi} proved 
that if $\varepsilon>0$, $q\geq 3$ and $a_1,\dots,a_q$ are small functions, then
\begin{equation}\label{cz}
(q-2-\varepsilon) T(r,f)\leq \sum_{j=1}^3 \overline{N}(r,a_j,f)
\end{equation}
outside some exceptional set, but this result lies much deeper.

We shall need that if $j<k$, then $f_j$ is a small function with respect to~$f_k$; that is,
\begin{equation}\label{c0}
T(r,f_j)=o(T(r,f_k))
\quad\text{as}\ r\to\infty \ \text{if}\ j<k.
\end{equation}
Of course, this follows directly from Proposition~\ref{prop2},
but it  is also an immediate consequence of the the result~\cite[Lemma~ 2.6]{Hayman1964} that if 
$f$ and $g$ are transcendental entire functions, then
\begin{equation}\label{c1}
T(r,f)=o(T(r,f\circ g))
\quad\text{as}\ r\to\infty.
\end{equation}
Alternatively, we could use that
\begin{equation}\label{c1a}
T(r,g)=o(T(r,f\circ g))
\quad\text{as}\ r\to\infty.
\end{equation}
The latter result is an exercise in Hayman's book~\cite[p.~54]{Hayman1964}. For a thorough
discussion of these and related result we also refer to a paper by Clunie~\cite{Clunie1970}.
\begin{proof}[Proof of Proposition~\ref{prop1}]
We denote by $\overline{n}_A(r)$ the number of parameters $\lambda$ 
in $\{z\colon 0<|z|\leq r\}$ which satisfy~\eqref{b1}
and by $\overline{n}_B(r)$ the number of those $\lambda$ in $\{z\colon 0<|z|\leq r\}$
for which there exist $i,j\in\N$ satisfying $0<i<j<k+l$ and 
$E_\lambda^i(0)= E_\lambda^j(0)$; that is, $f_i(\lambda)=f_j(\lambda)$.
We also put
\begin{equation}\label{c2}
\overline{N}_A(r)=\int_0^r \frac{\overline{n}_A(t)}{t} dt
\quad\text{and}\quad
\overline{N}_B(r)=\int_0^r \frac{\overline{n}_B(t)}{t} dt.
\end{equation}
Then $n(r)=\overline{n}_A(r)-\overline{n}_B(r)$ and
\begin{equation}\label{c2a}
N(r)=\overline{N}_A(r)-\overline{N}_B(r).
\end{equation}

We apply Lemma~\ref{lemma1} with $f=f_{k+l}$, $a_1=0$, $a_2=f_k$ and $a_3=\infty$.
Note that the choice $a_2=f_k$ is admissible by~\eqref{c0}.
We have $\overline{N}(r,0,f_{k+l})=\log r$ and $\overline{N}(r,\infty,f_{k+l})=0$.
Noting that $\overline{N}(r,f_k,f_{k+l})$ and $\overline{N}_A(r)$ count the same points,
except that $0$ is counted in $\overline{N}(r,f_k,f_{k+l})$ but not in $\overline{N}_A(r)$,
we see that $\overline{N}(r,f_k,f_{k+l})=\overline{N}_A(r)+\log r$.
We thus deduce from  Lemma~\ref{lemma1} that
\begin{equation}\label{c3}
T(r,f_{k+l})\leq 
\overline{N}_A(r)+S(r,f_{k+l}).
\end{equation}
On the other hand, the first fundamental theorem of Nevanlinna theory and~\eqref{c0} imply that
\begin{equation}\label{c4}
\begin{aligned}
\overline{N}_A(r)
&=\overline{N}(r,f_k,f_{k+l})-\log r
\leq T(r,f_{k+l}-f_k)+O(1)
\\ &
\leq T(r,f_{k+l})+T(r,f_k)+O(1)
=(1+o(1))T(r,f_{k+l}).
\end{aligned}
\end{equation}
Combining the last two equations we find that
\begin{equation}\label{c5}
\overline{N}_A(r)=T(r,f_{k+l})+S(r,f_{k+l}).
\end{equation}
The first fundamental theorem also yields that
\begin{align}\label{c6}
\overline{N}_B(r)
&\leq \sum_{0<i<j<k+l} N(r,f_i,f_j)
\leq \sum_{0<i<j<k+l} T(r,f_j-f_i) +O(1)
\\ &
\leq \sum_{0<i<j<k+l} T(r,f_j)+T(r,f_i)+O(1)
= O\!\left(\sum_{0<j<k+l} T(r,f_j) \right)
\end{align}
so that
\begin{equation}\label{c7}
\overline{N}_B(r)=o(T(r,f_{k+l}))
\end{equation}
by~\eqref{c0}.
The conclusion now follows from~\eqref{c2a}, \eqref{c5} and~\eqref{c7}.
\end{proof}

\begin{remark}
The ideas used in the above proof are similar to those employed by 
Baker (see~\cite{Baker1960} or~\cite[Section~2.8]{Hayman1964}) in his proof 
that  a transcendental entire function has periodic points of period $p$ for 
all $p\in\N$, with at most one exception. 
His conjecture that $p=1$ is the only possible exception
was proved in~\cite{Bergweiler1991}.
\end{remark}

\section{Proof of Proposition~\ref{prop2}}  \label{sec3}
An exercise in Hayman's book~\cite[p.~7]{Hayman1964} is to show that
\begin{equation}\label{d0a}
T\!\left(r,e^{e^z}\right)\sim  \frac{e^r}{\sqrt{2\pi^3 r}} .
\end{equation}
The computations here are similar, but somewhat more involved.

The proof of Proposition~\ref{prop2} we give below is self-contained, but we note that using results of 
Hayman~\cite{Hayman1956} the proof can be shorted. More specifically, Lemmas~\ref{lemma6} and~\ref{lemma7}
below can be replaced by a reference to results of this paper; see the remark at the end of this section.

We define
\begin{equation}\label{a2}
a_k(r)=\frac{d\log f_k(r)}{d\log r}=\frac{rf'_k(r)}{f_k(r)}
\quad\text{and}\quad
b_k(r)=\frac{d \; a_k(r)}{d\log r}=ra'_k(r) .
\end{equation}
We also put
\begin{equation}\label{d0}
F_k(z)=\prod_{j=1}^k f_j(z),
\end{equation}
with $F_0(z)=1$.
\begin{lemma}\label{lemma8}
Let $k\geq 2$. Then
\begin{equation}\label{d3}
a_k(r)\sim F_{k-1}(r)
\quad\text{and}\quad
b_k(r)\sim F_{k-1}(r)F_{k-2}(r)=f_{k-1}(r)F_{k-2}(r)^2.
\end{equation}
\end{lemma}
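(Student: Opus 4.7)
The plan is to read off recurrences for $a_k$ and $b_k$ directly from the functional equation $f_{m+1}(z)=ze^{f_m(z)}$, and then prove both asymptotics by a simple simultaneous induction on $k$.

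First I would take $\log$ of $f_{m+1}(z)=ze^{f_m(z)}$ and differentiate to get $f'_{m+1}/f_{m+1}=1/z+f'_m$; multiplying by $r$ and using $rf'_m(r)=a_m(r)f_m(r)$ yields the clean recurrence
\begin{equation}\label{plan-a}
a_{m+1}(r)=1+a_m(r)f_m(r).
\end{equation}
Applying $d/d\log r$ once more to \eqref{plan-a} and using that $d/d\log r$ acts on $f_m$ by multiplication by $a_m$, I get
\begin{equation}\label{plan-b}
b_{m+1}(r)=f_m(r)\bigl(b_m(r)+a_m(r)^2\bigr).
\end{equation}
These are the only identities that need to be derived.

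Next I would record the basic fact that $f_j(r)\to\infty$ as $r\to\infty$ for every $j\geq 1$ (immediate by induction on $j$ from $f_1(r)=r$ and $f_{j+1}(r)=re^{f_j(r)}$). This implies that $F_{j-1}(r)=o(F_j(r))$, since $F_j/F_{j-1}=f_j\to\infty$. With this in hand, the first asymptotic follows by induction on $k$: the base $k=2$ gives $a_2(r)=1+r\sim r=F_1(r)$, and if $a_k\sim F_{k-1}$ then by \eqref{plan-a}
\begin{equation}
a_{k+1}(r)=1+a_k(r)f_k(r)\sim F_{k-1}(r)f_k(r)=F_k(r),
\end{equation}
the constant $1$ being negligible because $a_kf_k\to\infty$.

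Finally, I would push the induction through for $b_k$ using \eqref{plan-b}. The base $k=2$ gives $b_1=0$, $b_2(r)=f_1(r)\cdot 1=r=F_1(r)F_0(r)$. For the step, assuming $a_k\sim F_{k-1}$ and $b_k\sim F_{k-1}F_{k-2}$,
\begin{equation}
b_{k+1}(r)\sim f_k(r)\bigl(F_{k-1}(r)F_{k-2}(r)+F_{k-1}(r)^2\bigr)\sim f_k(r)F_{k-1}(r)^2=F_k(r)F_{k-1}(r),
\end{equation}
since $F_{k-2}=o(F_{k-1})$ makes the $a_k^2$ term dominate the $b_k$ term inside the parentheses. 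The only real conceptual point is this last observation, that the square of the log-derivative dominates the next derivative, and it reduces to the elementary growth statement $f_{k-1}(r)\to\infty$; so no step is truly an obstacle, the proof is a clean two-line induction once \eqref{plan-a} and \eqref{plan-b} are in place.
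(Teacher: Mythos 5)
Your proof is correct, and it is essentially the paper's argument in slightly different clothing. Both start from the identity obtained by logarithmic differentiation of $f_{m+1}(z)=ze^{f_m(z)}$, namely $zf_{m+1}'/f_{m+1}=1+zf_m'$, i.e.\ $a_{m+1}=1+a_mf_m$. The paper unrolls this recurrence into the closed form $a_k(r)=F_{k-1}(r)\sum_{j=0}^{k-1}1/F_j(r)$ and reads off $a_k\sim F_{k-1}$ at a glance, then dispatches $b_k$ with the remark that it ``follows by a straightforward calculation.'' You instead keep the recurrences $a_{m+1}=1+a_mf_m$ and $b_{m+1}=f_m(b_m+a_m^2)$ and run a clean simultaneous induction, explicitly noting that the $a_k^2$ term dominates $b_k$ because $F_{k-2}/F_{k-1}=1/f_{k-1}\to0$. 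The content is the same; your version has the small advantage of actually spelling out the ``straightforward calculation'' for $b_k$, while the paper's closed-form sum for $a_k$ is a slightly tidier intermediate object. Either organization is fine.
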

\begin{proof}
Since $zf_k'(z)=f_k(z)+f_k(z)zf_{k-1}'(z)$ we see by induction that
\begin{equation}\label{h8}
zf_k'(z)=\sum_{m=0}^{k-1} \prod_{l=0}^{m} f_{k-l}(z)=
F_k(z)\sum_{j=0}^{k-1}\frac{1}{F_j(z)}.
\end{equation}
Hence
\begin{equation}\label{h10}
a_k(r)=F_{k-1}(r)\sum_{j=0}^{k-1}\frac{1}{F_j(r)} \sim F_{k-1}(r)
\end{equation}
as claimed.
The asymptotics for $b_k(r)$ 
follow from this by a  straightforward calculation.
\end{proof}

By $\log f_k$ we denote the branch of the logarithm which is real on the positive real axis.
\begin{lemma}\label{lemma6}
Let $k\geq 2$ and $r\geq 1$. Then
\begin{equation}\label{d1}
\log f_k(re^\tau)= \log f_k(r) +a_k(r)\tau + \frac12 b_k(r)\tau^2+ R(\tau)
\end{equation}
where
\begin{equation}\label{d2}
|R(\tau)|\leq 6\cdot 3^{3(k-1)}F_{k-1}(r)F_{k-2}(r)^2|\tau|^3
\quad\text{for}\ |\tau|\leq \frac{1}{2\cdot 3^{k-1} F_{k-2}(r)}.
\end{equation}
\end{lemma}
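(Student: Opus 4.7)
The plan is to prove the expansion by induction on $k$. The starting point is the functional equation $\log f_k(z) = \log z + f_{k-1}(z)$, which, after the substitution $z = re^\tau$, becomes the clean recursion
\[
g_k(\tau) := \log f_k(re^\tau) = \log r + \tau + \exp\bigl(g_{k-1}(\tau)\bigr).
\]
Since $g_k(\tau) = \log r + \tau + f_{k-1}(re^\tau)$ and $f_{k-1}$ is entire, the function $g_k$ is entire in $\tau$, so the Taylor expansion at $\tau=0$ exists automatically and only the remainder has to be estimated. I would first check that the three lowest coefficients produced by the recursion are indeed $\log f_k(r)$, $a_k(r)$, and $b_k(r)$: differentiating the functional equation yields
\[
a_k(r) = 1 + f_{k-1}(r)\,a_{k-1}(r), \qquad b_k(r) = f_{k-1}(r)\bigl(b_{k-1}(r) + a_{k-1}(r)^2\bigr),
\]
which coincides with what one gets by formally expanding $\exp(g_{k-1}(\tau))$ to order two in $\tau$. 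The base case $k=2$ is explicit: $g_2(\tau) = \log r + \tau + re^\tau$ has remainder $r\sum_{n\geq 3}\tau^n/n!$, which is easily bounded on $|\tau|\leq 1/6$.

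For the inductive step, set $w(\tau) := g_{k-1}(\tau) - g_{k-1}(0)$, so the induction hypothesis reads $w(\tau) = a_{k-1}(r)\tau + \tfrac12 b_{k-1}(r)\tau^2 + R_{k-1}(\tau)$ with the claimed cubic bound on $R_{k-1}$. Using Lemma~\ref{lemma8} to estimate $a_{k-1}(r)$ and $b_{k-1}(r)$, one verifies that the radius $1/(2\cdot 3^{k-1}F_{k-2}(r))$ is calibrated so that each of $|a_{k-1}\tau|$, $|b_{k-1}\tau^2|$, and $|R_{k-1}(\tau)|$ is bounded by a small numerical constant on this disk -- in particular $|w(\tau)|\leq\tfrac12$ say. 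I would then decompose
\[
e^{g_{k-1}(\tau)} = f_{k-1}(r)\bigl(1 + w + \tfrac12 w^2\bigr) + f_{k-1}(r)\bigl(e^w - 1 - w - \tfrac12 w^2\bigr),
\]
substitute the expansion of $w$, peel off the degree-$\leq 2$ polynomial part in $\tau$ (which combines with $\log r + \tau$ to produce the quadratic Taylor polynomial of $g_k$), and take everything else as $R_k(\tau)$. The remainder then splits into three pieces: $f_{k-1}(r)R_{k-1}(\tau)$ together with its cross-terms with the quadratic part of $w$; the part of $\tfrac12 f_{k-1}(r)w^2$ of order $\geq 3$ in $\tau$, whose leading contribution is $\tfrac12 f_{k-1}(r)a_{k-1}(r)b_{k-1}(r)\tau^3$; and the exponential tail $f_{k-1}(r)(e^w - 1 - w - \tfrac12 w^2)$, bounded by $\tfrac16 f_{k-1}(r)e^{|w|}|w|^3$ via the standard Taylor estimate for $e^w$. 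Invoking Lemma~\ref{lemma8} once more, each piece is of order $F_{k-1}(r)F_{k-2}(r)^2|\tau|^3$, matching the target scale.

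The main obstacle is the bookkeeping of numerical constants, because the lemma demands the explicit coefficient $6\cdot 3^{3(k-1)}$ rather than just an abstract $O(\cdot)$. A factor of roughly $3^3$ per induction step arises naturally from the ratio of successive radii (which is $3f_{k-2}(r)\geq 3$ for large $r$), together with the conversion $|\tau|^n \leq (\text{radius})^{n-3}|\tau|^3$ used to turn higher-order errors into cubic ones. Provided these constants are tracked faithfully across the three remainder pieces listed above, the induction closes and recovers precisely the explicit bound asserted in the lemma.
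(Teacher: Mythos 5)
Your approach is genuinely different from the paper's, and while the overall shape is plausible, it has a concrete gap that the paper's route is specifically designed to avoid. The paper does not track the Taylor remainder through an induction at all: it first proves the pointwise estimate
$f_j(re^t)\leq(1+3^jF_{j-1}(r)t)f_j(r)\leq 2f_j(r)$ for $t\leq 1/(3^jF_{j-1}(r))$
by a quick induction, then writes $h(\tau)=\log f_k(re^\tau)=\log r+\tau+f_{k-1}(re^\tau)$ and applies Cauchy's integral formula for the cubic Taylor remainder,
$R(\tau)=\frac{\tau^3}{2\pi i}\int_{|w|=s}\frac{h(w)}{w^3(w-\tau)}\,dw$
with $s=1/(3^{k-1}F_{k-2}(r))$. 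This yields the explicit bound in one stroke because the sup of $|h|$ on the circle $|w|=s$ is controlled directly by the preliminary estimate, so there is no propagation of constants through orders of $\tau$ or through the index $k$.

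The gap in your version is twofold. First, you explicitly defer the constant bookkeeping (``provided these constants are tracked faithfully across the three remainder pieces''), but this is precisely where the lemma's content lies --- the factor $6\cdot 3^{3(k-1)}$ is the whole point, and your heuristic that the ratio of successive radii contributes ``roughly $3^3$'' is unjustified: that ratio is $3f_{k-2}(r)$, which is not a fixed numerical constant and in fact grows with $r$. Second, and more seriously, you invoke Lemma~\ref{lemma8} to ``calibrate'' $|a_{k-1}\tau|$, $|b_{k-1}\tau^2|$, and $|w(\tau)|$ on the relevant disk, but Lemma~\ref{lemma8} gives only the asymptotics $a_{k}(r)\sim F_{k-1}(r)$ as $r\to\infty$. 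Lemma~\ref{lemma6} is asserted for all $r\geq 1$ with explicit constants; asymptotic control is not enough to pin those constants down, and the exact identity $a_k(r)=F_{k-1}(r)\sum_{j=0}^{k-1}1/F_j(r)$ shows that the ratio $a_k(r)/F_{k-1}(r)$ can be as large as $k$ for small $r$. So your induction would at best give a weaker, $k$-dependent degradation of the constants rather than the clean geometric factor claimed. The Cauchy-estimate route sidesteps this entirely because it never needs quantitative bounds on $a_k$ or $b_k$ --- only on $f_{k-1}(re^s)$, which the elementary inequality above supplies uniformly for $r\geq 1$.
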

\begin{proof}
We first show by induction that if $j\in\N$ and $r\geq 1$, then
\begin{equation}\label{i1}
f_j(re^t)\leq (1+3^j F_{j-1}(r)t)f_j(r)\leq 2 f_j(r)
\quad\text{for}\ t\leq \frac{1}{3^jF_{j-1}(r)}.
\end{equation}
This is clear for $j=1$ in which case this just says that
\begin{equation}\label{i2}
re^t\leq (1+3t)r\leq 2r
\quad\text{for}\ t\leq \frac{1}{3}.
\end{equation}
Assuming that~\eqref{i1} holds, we find that if $t\leq 1/(3^{j+1}F_j(r))$ and $r\geq 1$, then
also $t\leq 1/(3^{j}F_{j-1}(r))$ and thus
\begin{equation}\label{i3}
\begin{aligned}
f_{j+1}(re^t)
& = r e^t \exp f_j(re^t)
\leq  r e^t \exp\!\left( (1+3^j F_{j-1}(r)t)f_j(r)\right)
\\ &
=  r e^t \exp\!\left( f_j(r)+3^j F_{j}(r)t\right)
=  f_{j+1}(r) \exp\!\left( (1+3^j F_{j}(r))t\right)
\\ &
\leq  f_{j+1}(r) \exp\!\left( 2\cdot 3^j F_{j}(r)t\right)
\leq  f_{j+1}(r) \left( 1+ 3^{j+1} F_{j}(r)t\right).
\end{aligned}
\end{equation}
This proves~\eqref{i1}.

We put
\begin{equation}\label{h1}
h(\tau)=\log f_k(re^\tau)= \log r +\tau +f_{k-1}(re^\tau).
\end{equation}
Noting that~\eqref{d1} is nothing else than the Taylor expansion of $h$ with remainder $R(\tau)$
we deduce that
(see, e.g., \cite[p.~126]{Ahlfors1966})
\begin{equation}\label{h2}
R(\tau)=\frac{\tau^3}{2\pi i} \int_{|w|=s}\frac{h(w)}{w^3(w-\tau)}dw
\end{equation}
if $s> |\tau|$. With $s=1/(3^{k-1}F_{k-2}(r))$ we find that if $|\tau|\leq s/2$, then
\begin{equation}\label{h3}
\begin{aligned}
|R(\tau)|
& \leq 
\frac{2|\tau|^3}{s^3}\max_{|w|=s}|h(w)|
\leq \frac{2|\tau|^3}{s^3}(\log r+s +f_{k-1}(re^s))
\\ &
\leq \frac{2|\tau|^3}{s^3}(\log r+s +2 f_{k-1}(r))
\leq \frac{6|\tau|^3}{s^3} f_{k-1}(r)
\\ &
= 6\cdot 3^{3(k-1)} F_{k-1}(r) F_{k-2}(r)^2 |\tau|^3.
\end{aligned}
\end{equation}
This is~\eqref{d2}. 
\end{proof}
We have restricted to $k\geq 2$ in Lemma~\ref{lemma6}, but we note that~\eqref{d1} trivially holds
for $k=1$ with $a_1(r)=1$, $b_1(r)=0$ and $R(\tau)=0$.

We will actually use Lemma~\ref{lemma6} not for the computation of $T(r,f_k)$, but for that of 
\begin{equation}\label{d4}
T(r,f_{k+1})=\frac{1}{2\pi}\int_{-\pi}^\pi \log^+|f_{k+1}(re^{i\theta})| d\theta.
\end{equation}
Here $\log^+ x=\max\{\log x,0\}$. The notation 
$h^+(x)=\max\{h(x),0\}$ will also be used for other functions $h$ in the sequel.

We will split the integral in~\eqref{d4} into two parts by considering the ranges $|\theta|\leq \delta(r)$
and $\delta(r)\leq |\theta|\leq\pi$ separately, for a suitably chosen function $\delta(r)$.
It will be convenient to chose 
\begin{equation}\label{d4a}
\delta(r)=\frac{1}{F_{k-1}(r)^{2/5}}.
\end{equation}
Then Lemma~\ref{lemma6} can be applied for $|\theta|\leq \delta(r)$, with an error term
$R(i\theta)$ satisfying $R(i\theta)=o(1)$.

To deal with the range $\delta(r)\leq |\theta|\leq\pi$ we will use the following lemma.
\begin{lemma}\label{lemma7}
If $k\geq 2$,  $\delta(r)\leq |\theta|\leq\pi$ and $r$ is sufficiently large, then
\begin{equation}\label{d5}
 \log |f_{k+1}(re^{i\theta})|\leq \frac{f_{k}(r)}{f_{k-1}(r)}.
\end{equation}
\end{lemma}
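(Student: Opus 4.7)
Since $f_{k+1}(z) = z e^{f_k(z)}$, one has $\log|f_{k+1}(re^{i\theta})| = \log r + \re f_k(re^{i\theta})$, and combining this with $\re f_k(re^{i\theta}) \le |f_k(re^{i\theta})| = r\exp(\re f_{k-1}(re^{i\theta}))$ reduces the lemma to the inequality
\begin{equation}
\re f_{k-1}(re^{i\theta}) \;\le\; f_{k-1}(r) - \log f_{k-1}(r) - 1
\end{equation}
for $|\theta|\ge\delta(r)$ and $r$ sufficiently large. Indeed, this would give $|f_k(re^{i\theta})| \le f_k(r)/(e f_{k-1}(r))$, and since $f_k(r)/f_{k-1}(r)$ grows much faster than $\log r$, the bound $\log r + f_k(r)/(e f_{k-1}(r)) \le f_k(r)/f_{k-1}(r)$ holds eventually.

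I would establish this by induction on $j\in\{1,2,\dots,k-1\}$, with the statement that, for $|\theta|\ge\delta(r)$ and $r$ sufficiently large,
\begin{equation}
G_j(r,\theta) := f_j(r) - \re f_j(re^{i\theta}) \;\ge\; H_j(r) := \frac{F_j(r)}{2^{j+1}\,F_{k-1}(r)^{4/5}}.
\end{equation}
The base case $j=1$ is immediate from $G_1 = r(1-\cos\theta) \ge r\theta^2/4 \ge r F_{k-1}(r)^{-4/5}/4 = H_1(r)$, using $|\theta|\ge\delta(r) = F_{k-1}(r)^{-2/5}$. For the inductive step, writing $f_j(re^{i\theta}) = (f_j(r)-G_j)+i\,\im f_j(re^{i\theta})$ and invoking $f_{j+1}(z) = z e^{f_j(z)}$ gives
\begin{equation}
\re f_{j+1}(re^{i\theta}) = re^{f_j(r)-G_j}\cos\bigl(\theta+\im f_j(re^{i\theta})\bigr) \;\le\; f_{j+1}(r)\,e^{-G_j},
\end{equation}
so that $G_{j+1} \ge f_{j+1}(r)(1-e^{-G_j}) \ge f_{j+1}(r)(1-e^{-H_j}) \ge f_{j+1}(r)H_j/2 = H_{j+1}$, the last step using $1-e^{-x}\ge x/2$ for $0\le x\le 1$. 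This last application requires $H_j\le 1$ for $j\le k-2$, which follows because $H_j = F_j(r)^{1/5}\bigl/\bigl(2^{j+1}(f_{j+1}(r)\cdots f_{k-1}(r))^{4/5}\bigr)$ tends to $0$ as $r\to\infty$: the factor $f_{j+1}(r) = re^{f_j(r)}$ dwarfs any power of $F_j(r)$.

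At $j = k-1$ the inductive conclusion reads $G_{k-1}(r,\theta)\ge F_{k-1}(r)^{1/5}/2^k$, which exceeds $\log f_{k-1}(r) + 1 = \log r + f_{k-2}(r) + 1$ for large $r$, since $F_{k-1}(r)^{1/5} \ge f_{k-1}(r)^{1/5} = r^{1/5}e^{f_{k-2}(r)/5}$ dominates any polynomial in $f_{k-2}(r)$. Feeding this back into the opening reduction completes the proof. The principal difficulty is the design of $H_j$: it must compound through the iteration, gaining a factor $f_{j+1}(r)/2$ at each step, so that the terminal value $H_{k-1}$ overtakes $\log f_{k-1}(r)$; yet it must remain $\le 1$ at every intermediate stage so that the linearization $1-e^{-x}\ge x/2$ remains valid. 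The exponent $4/5$ appearing in $H_j$, which mirrors the exponent $2/5$ in $\delta(r)$, is exactly calibrated to reconcile these two constraints.
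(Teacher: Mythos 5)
Your argument is correct in substance and follows essentially the same inductive strategy as the paper: both proofs show by induction on $j$ that, for $|\theta|\geq\delta(r)$, the value $f_j(re^{i\theta})$ is exponentially smaller than $f_j(r)$, with the decay compounding by roughly a factor $f_{j+1}(r)$ at each level. The bookkeeping differs slightly. The paper tracks the majorant $g_j$ of $|f_j(re^{i\theta})|$ and proves the $\theta$-dependent bound $g_j(\theta)\leq f_j(r)\exp(-F_{j-1}(r)\theta^2/2^j)$, then uses monotonicity of $g_{k+1}$ at the very end to pass from $\theta=\delta(r)$ to the whole range $\delta(r)\leq|\theta|\leq\pi$; you instead track the real-part deficit $G_j = f_j(r)-\re f_j(re^{i\theta})$ and fix the bound at the value $H_j = F_j(r)\delta(r)^2/2^{j+1}$ from the outset. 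Since $|f_{j+1}(re^{i\theta})| = f_{j+1}(r)e^{-G_j}$, the two formulations encode the same information, and your $H_j$ is exactly the paper's bound at $\theta=\delta(r)$, shifted by one index.

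One step as written is incorrect, though easily repaired. In your base case you assert $1-\cos\theta\geq\theta^2/4$ for the full range $\delta(r)\leq|\theta|\leq\pi$; this inequality fails for $|\theta|$ near $\pi$ (at $\theta=\pi$ the left side is $2$ but $\pi^2/4>2$). The correct route is to use that $1-\cos\theta$ is increasing on $[0,\pi]$, so $G_1(r,\theta)\geq r\bigl(1-\cos\delta(r)\bigr)\geq r\,\delta(r)^2/4 = H_1(r)$, where $1-\cos t\geq t^2/4$ is applied only for $|t|\leq 1$, which suffices because $\delta(r)\to 0$. (The paper avoids this issue by keeping the $\theta^2$-bound only on the range $|\theta|\leq 1/\sqrt{F_{j-2}(r)}$ and handing off to monotonicity of $g_{k+1}$ at the end.) With that fix, and noting that your formula $\log f_{k-1}(r)=\log r + f_{k-2}(r)$ needs $k\geq 3$ (for $k=2$ one simply has $\log f_1(r)=\log r$, and the conclusion is even easier), the argument is sound.
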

\begin{proof}
Put $g_1(\theta)=r\cos\theta$ and $g_{j}(\theta)=r\exp {g_{j-1}(\theta)}$ for $j\geq 2$.
Noting that $g_2(\theta)=re^{r\cos\theta}=|f_{2}(re^{i\theta})|$ and 
\begin{equation}\label{d5a}
|f_{j}(re^{i\theta})|=r\exp \re (f_{j-1}(re^{i\theta}))\leq r\exp |f_{j-1}(re^{i\theta})|
\end{equation}
for $j\geq 3$ we see by induction that
\begin{equation}\label{k1}
|f_{j}(re^{i\theta})|\leq g_j(\theta)
\end{equation}
for all $j\geq 2$.

Since $\cos\theta\leq 1-\theta^2/4$ for $|\theta|\leq 1$ we have
\begin{equation}\label{k2}
\begin{aligned}
g_2(\theta)
&=
re^{r\cos\theta}
\leq re^r \exp\!\left(-r\frac{\theta^2}{4}\right)
\\ &
= f_2(r) \exp\!\left(-\frac{F_1(r)}{4}\theta^2\right)
\quad\text{for}\ |\theta|\leq  1.
\end{aligned}
\end{equation}
We shall show by induction that if $j\geq 2$ and $r\geq 1$, then
\begin{equation}\label{k3}
g_j(\theta)
\leq  f_j(r) \exp\!\left(-\frac{F_{j-1}(r)}{2^j}\theta^2\right)
\quad\text{for}\ |\theta|\leq \frac{1}{\sqrt{F_{j-2}(r)}}.
\end{equation}
Note that~\eqref{k2} says that this holds for $j=2$. Suppose now that $j\geq 2$
and that~\eqref{k3} holds. Let $|\theta|\leq 1/\sqrt{F_{j-1}(r)}$.
Then $|\theta|\leq 1/\sqrt{F_{j-2}(r)}$ since $r\geq 1$.
Noting that $e^{-x}\leq 1-x/2$ for $0\leq x\leq 1$ we obtain
\begin{equation}\label{k4}
\begin{aligned}
g_{j+1}(\theta)
& = r\exp g_j(\theta)
\leq r \exp\!\left(  f_j(r) \exp\!\left(-\frac{F_{j-1}(r)}{2^j}\theta^2\right)\right)
\\ &
\leq r \exp\!\left(  f_j(r) \left(1-\frac{F_{j-1}(r)}{2^{j+1}}\theta^2\right)\right)
=  f_{j+1}(r) \exp\!\left(-\frac{F_{j}(r)}{2^{j+1}}\theta^2\right).
\end{aligned}
\end{equation}
Hence~\eqref{k3} holds for all $j\geq 2$.

Suppose now that $\delta(r)\leq |\theta|\leq\pi$. Then
\begin{equation}\label{k5}
 \log |f_{k+1}(re^{i\theta})|\leq  \log g_{k+1}(\theta) \leq  \log g_{k+1}(\delta(r))
 = g_{k}(\delta(r))  + \log r
\end{equation}
by~\eqref{k1}.
Since $\delta(r)=1/F_{k-1}(r)^{2/5}\leq 1/\sqrt{F_{k-2}(r)}$ 
for large $r$ we deduce from the last inequality and~\eqref{k3} that
\begin{equation}\label{k6}
\begin{aligned}
 \log |f_{k+1}(re^{i\theta})|
& \leq
f_{k}(r) \exp\!\left(-\frac{F_{k-1}(r)}{2^{k}}\delta(r)^2\right)
+\log r
\\ &
= 
f_{k}(r) \exp\!\left(-\frac{F_{k-1}(r)^{1/5}}{2^{k}}\right) 
+\log r
\leq \frac{f_{k}(r)}{f_{k-1}(r)},
\end{aligned}
\end{equation}
if $r$ is sufficiently large.
\end{proof}
\begin{lemma} \label{ec}
\begin{equation}\label{lec}
\lim_{t\to\infty}\int_{-\infty}^\infty e^{-x^2}\cos^+(tx)dx=\frac{1}{\sqrt{\pi}}.
\end{equation}
\end{lemma}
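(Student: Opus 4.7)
The plan is to reduce the computation to a single identity, namely the Fourier transform of a Gaussian, by expanding $\cos^+$ in a Fourier series. The function $\cos^+$ is even, continuous, and $2\pi$-periodic, so it has a Fourier cosine series
\begin{equation}
\cos^+(u)=\frac{1}{\pi}+\frac{1}{2}\cos u+\sum_{m=1}^\infty c_m \cos(2mu),
\end{equation}
where the constant term $1/\pi$ is nothing but the mean value of $\cos^+$ over a period. A short direct computation gives $c_m=O(1/m^2)$, so the series converges absolutely and uniformly on $\R$. In particular, the partial sums are uniformly bounded (by some constant independent of $u$ and of the truncation level).

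Next I would integrate term by term against $e^{-x^2}$. The key ingredient is the classical identity
\begin{equation}
\int_{-\infty}^\infty e^{-x^2}\cos(\xi x)\,dx=\sqrt{\pi}\,e^{-\xi^2/4},
\end{equation}
obtained either from the Fourier transform of the Gaussian or from contour deformation. Substituting $\xi=t$, $\xi=2mt$ and using the Fourier expansion with $u=tx$, one formally obtains
\begin{equation}
\int_{-\infty}^\infty e^{-x^2}\cos^+(tx)\,dx
=\frac{\sqrt{\pi}}{\pi}+\frac{\sqrt{\pi}}{2}e^{-t^2/4}+\sqrt{\pi}\sum_{m=1}^\infty c_m e^{-m^2t^2}.
\end{equation}
As $t\to\infty$, every term except the first decays exponentially to $0$, and the leading term is precisely $1/\sqrt{\pi}$, which is the asserted limit.

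The only non-trivial issue is justifying the interchange of sum and integral. Because the partial sums of the Fourier series converge to $\cos^+(tx)$ uniformly in $x$ and are dominated by a constant times $e^{-x^2}\in L^1(\R)$, Lebesgue's dominated convergence theorem applies and the term-by-term integration is legitimate. Alternatively, one can avoid computing the $c_m$ explicitly: write $\cos^+(u)=1/\pi+\phi(u)$ with $\phi$ continuous, periodic and of mean zero, approximate $\phi$ uniformly by trigonometric polynomials $P_N$ (Weierstrass/Fejér), observe that for each fixed $P_N$ one has $\int e^{-x^2}P_N(tx)\,dx\to 0$ exponentially fast by the Gaussian transform formula, and control the remainder $\phi-P_N$ via $\|\phi-P_N\|_\infty\cdot\int e^{-x^2}\,dx$. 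Either way the heart of the proof is the separation of the mean value from the oscillatory part, and the computation $\tfrac{1}{\pi}\int_{-\infty}^\infty e^{-x^2}\,dx=\tfrac{1}{\sqrt{\pi}}$.
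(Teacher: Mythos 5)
Your proof is correct, but it takes a genuinely different route from the paper's. The paper integrates by parts, writing
\begin{equation}
\int_{-\infty}^\infty e^{-x^2}\cos^+(tx)\,dx
=\int_{-\infty}^\infty 2xe^{-x^2}\int_0^x\cos^+(ty)\,dy\,dx,
\end{equation}
and then uses the elementary averaging fact $\int_0^x\cos^+(ty)\,dy\to x/\pi$ (locally uniformly for $x\neq 0$) to pass to the limit and obtain $\frac{2}{\pi}\int_{-\infty}^\infty x^2e^{-x^2}\,dx=1/\sqrt{\pi}$. You instead expand $\cos^+$ in its Fourier cosine series, separate the constant term $1/\pi$, and use the Gaussian transform $\int e^{-x^2}\cos(\xi x)\,dx=\sqrt{\pi}\,e^{-\xi^2/4}$ to show all oscillatory terms decay like $e^{-ct^2}$. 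Both approaches hinge on the same underlying fact, namely that the mean of $\cos^+$ over a period equals $1/\pi$; the paper accesses it through the linear growth rate of the primitive, you through the zeroth Fourier coefficient. Your method buys an explicit exponential rate of convergence, which is more than the lemma requires; the paper's method is lighter, needing only integration by parts and dominated convergence. One small wording slip in your write-up: what is dominated by $Ce^{-x^2}$ is the product $e^{-x^2}S_N(tx)$ (where $S_N$ is a partial sum, uniformly bounded since $\sum|c_m|<\infty$), not the partial sums of the Fourier series themselves — but the argument is sound.
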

\begin{proof}
Integration by parts yields
\begin{equation}\label{lec1}
\int_{-\infty}^\infty e^{-x^2}\cos^+(tx)dx=
\int_{-\infty}^\infty e^{-x^2} 2x\int_0^x \cos^+(ty)dy\; dx.
\end{equation}
Since
\begin{equation}\label{lec2}
\int_0^x \cos^+(ty)dy \sim \frac{x}{\pi}
\quad\text{as}\ t\to\infty,
\end{equation}
locally uniformly in $\R\backslash\{0\}$, we obtain
\begin{equation}
\label{lec3}
\lim_{t\to\infty}\int_{-\infty}^\infty e^{-x^2}\cos^+(tx)dx=
\frac{2}{\pi}\int_{-\infty}^\infty e^{-x^2} x^2 dx=\frac{1}{\sqrt{\pi}}
\end{equation}
as claimed.
\end{proof}

\begin{proof}[Proof of Proposition~\ref{prop2}]
It follows from Lemma~\ref{lemma6} that
\begin{equation}\label{s4}
f_k(re^{i\theta})= f_k(r)\exp\!\left(ia_k(r)\theta-\tfrac12 b_k(r)\theta^2\right)(1+S(\theta))
\quad\text{for}\ |\theta|\leq \delta(r),
\end{equation}
where
\begin{equation}\label{s5}
\begin{aligned}
|S(\theta)|
&
= \left| e^{R(i\theta)}-1 \right|
\leq 2 |R(i\theta)|
\\ &
\leq 12\cdot 3^{3(k-1)} F_{k-1}(r) F_{k-2}(r)^2 \delta(r)^3 
= 12\cdot 3^{3(k-1)} \frac{F_{k-2}(r)^2}{F_{k-1}(r)^{1/5}}
\end{aligned}
\end{equation}
for large $r$ and hence $S(\theta)=o(1)$ as $r\to\infty$. This implies that
\begin{equation}\label{q5a}
\begin{aligned}
\re\!\left( f_k(re^{i\theta} \right)
= f_k(r)\exp\!\left(-\tfrac12 b_k(r)\theta^2\right)\cos(a_k(r)\theta)+o\!\left( f_k(r)\exp\!\left(-\tfrac12 b_k(r)\theta^2\right)\right)
\end{aligned}
\end{equation}
and thus
\begin{equation}\label{q6}
\re^+\!\left( f_k(re^{i\theta} \right)
= f_k(r)\exp\!\left(-\tfrac12 b_k(r)\theta^2\right)\left(\cos^+(a_k(r)\theta)+o(1)\right)
\quad\text{for}\ |\theta|\leq \delta(r),
\end{equation}
where the term $o(1)$ is uniform in $\theta$.

We conclude that
\begin{equation}\label{q7}
\begin{aligned}
&\quad \ \int_{-\delta(r)}^{\delta(r)}
\log^+|f_{k+1}(re^{i\theta})| d\theta
\\
&=
f_k(r) \int_{-\delta(r)}^{\delta(r)}
\exp\!\left(-\tfrac12 b_k(r)\theta^2\right)\left(\cos^+(a_k(r)\theta)+o(1)\right)d\theta.
\\ &
=\frac{\sqrt{2}f_k(r)}{\sqrt{b_k(r)}}
 \int_{-c(r)}^{c(r)}
\exp\!\left(-u^2\right)\left(\cos^+\!\left(\frac{\sqrt{2}a_k(r)}{\sqrt{b_k(r)}}u\right)+o(1)\right)\!du
\end{aligned}
\end{equation}
with
\begin{equation}\label{q8}
c(r)=\frac{\sqrt{b_k(r)}\delta(r)}{\sqrt{2}}
=(1+o(1))\frac{F_{k-1}(r)^{1/10}\sqrt{F_{k-2}(r)}}{\sqrt{2}}
\to\infty
\end{equation}
by Lemma~\ref{lemma8}. The same lemma yields that
\begin{equation}\label{q8a}
\frac{a_k(r)}{\sqrt{b_k(r)}}
=(1+o(1))\frac{\sqrt{F_{k-1}(r)}}{\sqrt{F_{k-2}(r)}}
=(1+o(1))\sqrt{f_{k-1}(r)}
\to\infty.
\end{equation}
Lemma~\ref{lec} now implies that
\begin{equation}\label{q9}
\int_{-\delta(r)}^{\delta(r)} \log^+|f_{k+1}(re^{i\theta})| d\theta
\sim 
\frac{\sqrt{2}f_k(r)}{\sqrt{\pi b_k(r)}}.
\end{equation}
Since
\begin{equation}\label{q10}
\log^+|f_{k+1}(re^{i\theta})|\leq \frac{f_k(r)}{f_{k-1}(r)}=o\!\left( \frac{f_k(r)}{\sqrt{b_k(r)}}\right)
\quad\text{for}\ \delta(r)\leq |\theta|\leq \pi
\end{equation}
by Lemma~\ref{lemma7} and Lemma~\ref{lemma8} we conclude that
\begin{equation}\label{q11}
\int_{-\pi}^{\pi} \log^+|f_{k+1}(re^{i\theta})| d\theta
\sim 
\frac{\sqrt{2}f_k(r)}{\sqrt{\pi b_k(r)}}.
\end{equation}
Thus
\begin{equation}\label{q12}
\begin{aligned}
T(r,f_{k+1})
&=\frac{1}{2\pi}\int_{-\pi}^{\pi} \log^+|f_{k+1}(re^{i\theta})| d\theta
\\ &
\sim 
\frac{f_k(r)}{\sqrt{2\pi^3 b_k(r)}}
\sim 
\frac{f_k(r)}{\sqrt{2\pi^3}\sqrt{f_{k-1}(r)}F_{k-2}(r)}
\end{aligned}
\end{equation}
by Lemma~\ref{lemma8}. The conclusion follows with $k=m-1$.
\end{proof}

\begin{remark}
An entire function $f$ is called \emph{admissible} in the sense of Hayman~\cite{Hayman1956} 
if $f(r)=M(r,f)$ for large $r$ and if with
\begin{equation}\label{q2}
a(r)=\frac{d \log M(r,f)}{d\log r}=\frac{rf'(r)}{f(r)}
\quad\text{and}\quad
b(r)=\frac{d \; a(r)}{d\log r}=ra'(r)
\end{equation}
there exists $\delta(r)\in (0,\pi]$ such that, as $r\to\infty$,
\begin{equation}\label{a3}
f(re^{i\theta})\sim f(r)\exp\!\left(ia(r)\theta-\tfrac12 b(r)\theta^2\right)
\quad\text{for}\ |\theta|\leq \delta(r)
\end{equation}
and
\begin{equation}\label{a4}
f(re^{i\theta})= \frac{o(f(r))}{\sqrt{b(r)}}
\quad\text{for}\ \delta(r)\leq |\theta|\leq\pi.
\end{equation}
Moreover, it is assumed that $b(r)\to\infty$ as $r\to\infty$.

Hayman~\cite[Theorems VI and VIII]{Hayman1956} showed that
if $f$ is admissible, then so are $e^f$ and $fP$ for any real polynomial $P$ with
positive leading coefficient. This implies that $f_k$ is admissible for $k\geq 2$.

The admissibility of $f_k$ immediately yields slightly weaker versions of Lemmas~\ref{lemma6} and~\ref{lemma7},
but these versions are strong enough to prove Proposition~\ref{prop2}.
In fact, the arguments used in the above proof yield the following Proposition~\ref{prop3}.
Since its proof is largely analogous to that of Proposition~\ref{prop2}, replacing 
Lemmas~\ref{lemma6} and~\ref{lemma7} by a reference to~\eqref{a3} and~\eqref{a4}, 
we will only sketch the proof.
\end{remark}
\begin{proposition}\label{prop3}
Let $f$ be an admissible entire function and let $b(r)$ be defined by~\eqref{q2}. Then
\begin{equation}\label{q5}
T(r,e^f) \sim \frac{1}{\sqrt{2\pi^3}}
\frac{f(r)}{\sqrt{b(r)}}
\end{equation}
\end{proposition}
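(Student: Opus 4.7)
The plan is to carry out the integral estimation of $T(r,e^f)$ in direct analogy with the proof of Proposition~\ref{prop2}, using the admissibility hypotheses \eqref{a3} and \eqref{a4} in place of Lemmas~\ref{lemma6} and~\ref{lemma7}.

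First, since $\log^+|e^w|=\re^+(w)$, I write
\[
T(r,e^f)=\frac{1}{2\pi}\int_{-\pi}^{\pi}\re^+\!\bigl(f(re^{i\theta})\bigr)\,d\theta
\]
and split this into the saddle range $|\theta|\leq\delta(r)$ and the tail $\delta(r)\leq|\theta|\leq\pi$. The hypothesis \eqref{a4} controls the tail by $|f(re^{i\theta})|=o(f(r))/\sqrt{b(r)}$ uniformly, so that portion of the integral contributes only $o(f(r)/\sqrt{b(r)})$.

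On the saddle range, \eqref{a3} gives
\[
f(re^{i\theta})=f(r)\exp\!\bigl(ia(r)\theta-\tfrac12 b(r)\theta^2\bigr)(1+o(1)),
\]
uniformly in $\theta$, and taking real parts then positive parts yields
\[
\re^+\!\bigl(f(re^{i\theta})\bigr)=f(r)\exp\!\bigl(-\tfrac12 b(r)\theta^2\bigr)\bigl[\cos^+(a(r)\theta)+o(1)\bigr],
\]
exactly as in \eqref{q6}. The substitution $u=\sqrt{b(r)/2}\,\theta$ converts the saddle integral into
\[
\frac{\sqrt{2}\,f(r)}{\sqrt{b(r)}}\int_{-c(r)}^{c(r)}e^{-u^2}\bigl[\cos^+\!\bigl(t(r)u\bigr)+o(1)\bigr]\,du,
\]
with $c(r)=\delta(r)\sqrt{b(r)/2}$ and $t(r)=\sqrt{2}\,a(r)/\sqrt{b(r)}$. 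Assuming $c(r)\to\infty$ and $t(r)\to\infty$, Lemma~\ref{ec} shows that the inner integral tends to $1/\sqrt{\pi}$, and combining with the tail bound and dividing by $2\pi$ gives
\[
T(r,e^f)\sim\frac{f(r)}{\sqrt{2\pi^{3}\,b(r)}},
\]
which is \eqref{q5}.

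The main obstacle is verifying $c(r)\to\infty$ and $t(r)\to\infty$ purely from the admissibility hypotheses. The first is essentially built into the saddle-point setup: if $b(r)\delta(r)^2$ stayed bounded then the Gaussian factor in \eqref{a3} would not decay and the inner asymptotic \eqref{a3} would be incompatible with the tail bound \eqref{a4}. The second, $a(r)/\sqrt{b(r)}\to\infty$, is the essential new input, ensuring that the cosine oscillates many times within the Gaussian peak so that Lemma~\ref{ec} replaces it by its mean value $1/\pi$; it can be extracted from standard properties of Hayman-admissible functions, and indeed in the application to $f=f_k$ inside Proposition~\ref{prop2} it takes the explicit form $a_k(r)/\sqrt{b_k(r)}\sim\sqrt{f_{k-1}(r)}\to\infty$ obtained from Lemma~\ref{lemma8}.
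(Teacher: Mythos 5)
Your proposal follows essentially the same route as the paper's sketch: rewrite $T(r,e^f)$ via $\log^+|e^w|=\re^+ w$, split at $\delta(r)$, apply~\eqref{a3} with the Gaussian substitution on the central range, bound the tail via~\eqref{a4}, and invoke Lemma~\ref{ec} for the constant. The paper pins down $c(r)\to\infty$ slightly more concretely by substituting $\theta=\delta(r)$ into both~\eqref{a3} and~\eqref{a4} to get $\exp(-\tfrac12 b(r)\delta(r)^2)=o(1)$, whereas you argue it by an informal reductio; on the other hand you explicitly flag that $a(r)/\sqrt{b(r)}\to\infty$ is also required (a standard consequence of Hayman admissibility), a point the paper's sketch leaves implicit.
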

\begin{proof}[Sketch of proof]
First we note that~\eqref{a3} means that~\eqref{s4} holds
with $f_k$ replaced by $f$ and $S(\theta)=o(1)$ for $|\theta|\leq \delta(r)$.
We proceed as in the proof of Proposition~\ref{prop2}. 
To see that $c(r)=\delta(r)\sqrt{b(r)/2}\to\infty$ as 
in~\eqref{q8} we note that we may choose $\theta=\delta(r)$ in both~\eqref{a3} and~\eqref{a4}.
This yields
\begin{equation}\label{q10b}
f(r)\exp\!\left(-\tfrac12 b(r)\delta(r)^2\right)
=o\!\left( \frac{f(r)}{\sqrt{b(r)}}\right)
\end{equation}
and hence $\exp\!\left(-\tfrac12 b(r)\delta(r)^2\right)=o(1)$, from which we deduce that $c(r)\to\infty$.
We conclude that~\eqref{q9} holds with $f_k$ replaced by $f$ and $f_{k+1}$ replaced by~$e^f$; that is,
\begin{equation}\label{q9a}
\int_{-\delta(r)}^{\delta(r)} \log^+|e^{f(re^{i\theta})}| d\theta
\sim 
\frac{\sqrt{2}f(r)}{\sqrt{\pi b(r)}}.
\end{equation}
Moreover, 
\begin{equation}\label{q10a}
\log^+|e^{f(re^{i\theta})}|\leq |f(re^{i\theta})|=o\!\left( \frac{f(r)}{\sqrt{b(r)}}\right)
\quad\text{for}\ \delta(r)\leq |\theta|\leq \pi
\end{equation}
by~\eqref{a4}.
The conclusion follows directly from the last two equations.
\end{proof}
We note that Proposition~\ref{prop2} is an immediate consequence of Proposition~\ref{prop3}.

\section{Proof of the theorem}  \label{sec4}
A classical growth lemma of Borel (see~\cite[p.~90]{GO} or~\cite[Lemma~2.4]{Hayman1964}) says that if $\phi\colon [r_0,\infty)\to (0,\infty)$
is a continuous, increasing function, then there exists a subset $E$ of $[r_0,\infty)$ of finite measure 
such that
\begin{equation}\label{e0}
\phi\!\left(1+\frac{1}{\phi(r)}\right)\leq 2\phi(r) 
\quad\text{for}\ r\notin E .
\end{equation}
The exceptional set in Nevanlinna's second fundamental theorem and thus
the exceptional set $E$ in Proposition~\ref{prop1} arise from the application of this lemma
to the Nevanlinna characteristic.

If the function $\phi$ is sufficiently ``regular'', then
the inequality in Borel's lemma holds for all large~$r$.
In fact, boundedness of the exceptional set $E$ in Borel's lemma is sometimes
taken as a regularity condition; see, e.g.,~\cite[p.~245]{EdreiFuchs1964}.
The following lemma gives a simple condition implying that the exceptional set in this 
lemma is bounded. While I believe that this or similar results are well-known to the 
experts, I have not found this lemma in the literature.
\begin{lemma}\label{lemma3}
Let $\phi\colon [r_0,\infty)\to (0,\infty)$ be a non-decreasing, differentiable function satisfying
$\phi'(r)\leq \phi(r)^{3/2}$ for all~$r$. Then
\begin{equation}\label{e1}
\phi\!\left(1+\frac{1}{\phi(r)}\right)\sim\phi(r) 
\quad\text{as}\ r\to\infty.
\end{equation}
\end{lemma}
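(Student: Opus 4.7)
The plan is to use the differential inequality $\phi'\le\phi^{3/2}$ to control $\phi$ on the short interval $[r,r+1/\phi(r)]$, and then conclude by the mean value theorem. Since $\phi$ is non-decreasing, $\phi(r+1/\phi(r))\ge \phi(r)$, so only the upper bound is at stake. The case in which $\phi$ is bounded is essentially trivial (then $1/\phi(r)$ stays bounded away from $0$ and $\infty$, and the monotone bounded function $\phi$ has a limit, from which the asymptotic equality is immediate), so the interesting case is $\phi(r)\to\infty$, which I would assume from the outset.

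The key technical step is a Gronwall-type estimate. Rewriting $\phi'\le\phi^{3/2}$ as $-2(\phi^{-1/2})'\le 1$ and integrating from $r$ to $s\ge r$ yields
\begin{equation}\label{p1}
\phi(s)^{-1/2}\ge \phi(r)^{-1/2}-\tfrac12(s-r),
\end{equation}
which I would then invert to obtain
\begin{equation}\label{p2}
\phi(s)\le \frac{\phi(r)}{\bigl(1-\tfrac12(s-r)\sqrt{\phi(r)}\bigr)^2}
\end{equation}
as long as the denominator is positive. For $s\in[r,r+1/\phi(r)]$ we have $\tfrac12(s-r)\sqrt{\phi(r)}\le 1/(2\sqrt{\phi(r)})=o(1)$ as $r\to\infty$, so the bound \eqref{p2} delivers $\phi(s)=(1+o(1))\phi(r)$ uniformly on this interval.

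To finish, I would apply the mean value theorem: there exists $\xi\in(r,r+1/\phi(r))$ with
\begin{equation}\label{p3}
\phi\!\left(r+\tfrac{1}{\phi(r)}\right)-\phi(r)=\frac{\phi'(\xi)}{\phi(r)}\le \frac{\phi(\xi)^{3/2}}{\phi(r)}\le \frac{(1+o(1))\phi(r)^{3/2}}{\phi(r)}=(1+o(1))\sqrt{\phi(r)}.
\end{equation}
Dividing by $\phi(r)$ gives $\phi(r+1/\phi(r))/\phi(r)-1\le (1+o(1))/\sqrt{\phi(r)}\to 0$, which together with the trivial lower bound yields \eqref{e1}.

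The only mild obstacle is the Gronwall step: one has to justify that $\phi$ stays of the same order as $\phi(r)$ throughout $[r,r+1/\phi(r)]$, since otherwise the estimate $\phi'(\xi)\le\phi(\xi)^{3/2}$ could blow up. This is handled cleanly by the integrated form \eqref{p1}, which is valid as long as its right-hand side is positive—precisely the regime we are in once $\phi(r)$ is large. Aside from that, the proof is a short combination of the mean value theorem with the differential inequality; no Nevanlinna-theoretic input is needed here.
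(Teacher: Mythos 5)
Your proof is correct and takes essentially the same route as the paper: the key observation in both cases is that $\phi'\le\phi^{3/2}$ is equivalent to $-2(\phi^{-1/2})'\le 1$, which upon integrating over $[r,r+1/\phi(r)]$ gives $\phi(r)^{-1/2}-\phi(r+1/\phi(r))^{-1/2}\le 1/(2\phi(r))$, and the paper concludes directly from there. Your additional mean value theorem step at the end is redundant -- your inequality \eqref{p2} evaluated at $s=r+1/\phi(r)$ already yields $\phi(r+1/\phi(r))\le(1+o(1))\phi(r)$, which together with monotonicity finishes the proof -- but it is harmless.
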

\begin{proof}
The result is trivial if $\lim_{r\to\infty} \phi(r)<\infty$.
We may thus assume that  $\lim_{r\to\infty} \phi(r)=\infty$.
For $r\geq r_0$ we have
\begin{equation}\label{e2}
\frac{1}{\sqrt{\phi(r)}} -
\frac{1}{\sqrt{\phi(r+1/\phi(r))}} 
=\frac12 \int_r^{r+1/\phi(r)} \frac{\phi'(t)}{\phi(t)^{3/2}} dt
\leq 
\frac{1}{2\phi(r)}
\end{equation}
and thus
\begin{equation}\label{e3}
\sqrt{\frac{\phi(r)}{\phi(r+1/\phi(r))}}\geq 1-\frac{1}{2\sqrt{\phi(r)}},
\end{equation}
from which the conclusion follows.
\end{proof}
A straightforward calculation shows that the right hand side of~\eqref{b6} satisfies the
hypothesis -- and thus the conclusion -- of Lemma~\ref{lemma3}.
From this it is not difficult to deduce that the exceptional set in Nevanlinna's second
fundamental theorem and in Lemma~\ref{lemma1} is bounded for $f=f_m$.
This implies that no exceptional set $E$ is required in Proposition~\ref{prop1}. Combining this with Proposition~\ref{prop2}
we find that under the hypotheses of Proposition~\ref{prop1} we have
\begin{equation}\label{e4}
N(r)\sim T(r,f_{k+l})
\sim \frac{1}{\sqrt{2\pi^3}} \frac{f_{k+l-1}(r)}{\sqrt{f_{k+l-2}(r)}F_{k+l-3}(r)}
\quad\text{as}\ r\to\infty,
\end{equation}
with $F_{k+l-3}(r)$ defined by~\eqref{d0}.

To obtain a result for $n(r)$ we use the following result of London~\cite[p.~502]{London}.
\begin{lemma}\label{lemma4}
Let
$\phi,\psi\colon [x_0,\infty)\to (0,\infty)$ be functions satisfying
\begin{equation}\label{e5}
\phi(x)\sim\psi(x)
\quad\text{as}\ x\to\infty.
\end{equation}
Suppose that $\psi$ is convex and that
$\phi$ is twice continuously differentiable, with $\phi'$ and $\phi''$ positive and $\phi'$ unbounded.
Suppose also that there exists a constant $\beta$ such that 
\begin{equation}\label{e5a}
\frac{\phi''(x)\phi(x)}{\phi'(x)^2}\leq \beta 
\end{equation}
for all $x\geq x_0$. Then
\begin{equation}\label{e6}
\phi'(x)\sim\psi'(x) \quad\text{as}\ x\to\infty.
\end{equation}
Here $\psi'$ denotes either the left or the right derivative of $\psi$
on the countable set for which these may be different.
\end{lemma}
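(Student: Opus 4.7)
The plan is to exploit the convexity of $\psi$, which implies the sandwich
\[
\frac{\psi(x)-\psi(x-h)}{h}\ \leq\ \psi'(x-)\ \leq\ \psi'(x+)\ \leq\ \frac{\psi(x+h)-\psi(x)}{h}
\]
for every $h>0$, and then to choose $h=h(x)$ so that both extremes are asymptotic to $\phi'(x)$. The hypothesis $\phi''(x)\phi(x)\leq \beta\phi'(x)^2$ is equivalent to saying that $\phi'/\phi^\beta$ is non-increasing, which yields the power-law bound $\phi'(y)\leq \phi'(x)(\phi(y)/\phi(x))^\beta$ for $y\geq x$ and the reverse inequality for $y\leq x$. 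Thus small relative changes in $\phi$ force small relative changes in $\phi'$.

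The crucial choice is
\[
h(x)=\sqrt{\epsilon(x)}\,\frac{\phi(x)}{\phi'(x)},\qquad \text{where}\qquad \epsilon(x):=\sup_{y\geq x}\frac{|\psi(y)-\phi(y)|}{\phi(y)},
\]
so that $\epsilon$ is non-increasing and $\epsilon(x)\to 0$ by $\phi\sim\psi$. This $h$ is engineered to give $h\phi'(x)/\phi(x)=\sqrt{\epsilon(x)}\to 0$, so by the mean value theorem together with the power-law bound, $\phi(x\pm h)=(1+O(\sqrt{\epsilon}))\phi(x)$ and hence $\phi'(x\pm h)=(1+o(1))\phi'(x)$. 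Simultaneously, $\epsilon(x)\phi(x)/h=\sqrt{\epsilon(x)}\,\phi'(x)=o(\phi'(x))$, so the discrepancy $|\psi-\phi|$, divided by $h$, contributes only $o(\phi'(x))$ to the difference quotient.

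Combining these, the MVT gives $(\phi(x+h)-\phi(x))/h=(1+o(1))\phi'(x)$, and writing $\psi=\phi+(\psi-\phi)$ together with $|\psi-\phi|\leq\epsilon\,\phi$ yields
\[
\frac{\psi(x+h)-\psi(x)}{h}=(1+o(1))\phi'(x).
\]
The identical argument applied on $[x-h,x]$ handles the backward difference quotient, and the convexity sandwich then forces $\psi'(x\pm)\sim \phi'(x)$.

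The main obstacle is choosing $h$: keeping $\phi'$ essentially constant on $[x-h,x+h]$ demands $h$ small, while absorbing the $o(\phi)$-gap between $\psi$ and $\phi$ after division by $h$ demands $h$ large. The geometric-mean scaling $h=\sqrt{\epsilon(x)}\phi(x)/\phi'(x)$ reconciles these opposing constraints, and it is precisely at the step of converting the $O(\sqrt{\epsilon})$ relative change in $\phi$ into a $o(1)$ relative change in $\phi'$ that the quantitative hypothesis $\phi''\phi/(\phi')^2\leq \beta$ is used; the unboundedness of $\phi'$ is what guarantees that $h$ can be arranged to be compatible with the domain in the regime $x\to\infty$.
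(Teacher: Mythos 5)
The paper does not prove this lemma; it is quoted from London~\cite{London}, so your proof is a self-contained reconstruction rather than a reproduction of the author's argument. The strategy is sound and essentially complete: the convexity sandwich on the difference quotients, the reformulation of~\eqref{e5a} as monotonicity of $\phi'/\phi^\beta$ (yielding the power-law bounds $\phi'(y)\leq\phi'(x)(\phi(y)/\phi(x))^\beta$ for $y\geq x$ and the reverse for $y\leq x$), and the geometric-mean window $h=\sqrt{\epsilon}\,\phi/\phi'$ are all correct and together do exactly what is needed: they make $\phi(x\pm h)/\phi(x)=1+O(\sqrt{\epsilon})$ (a short ODE/Gronwall bootstrap from $\rho'\leq(\phi'(x)/\phi(x))\rho^\beta$, $\rho(0)=1$), hence $\phi'(x\pm h)=(1+o(1))\phi'(x)$, while the perturbation $|\psi-\phi|\leq\epsilon\phi$, divided by $h$, is $\sqrt{\epsilon}\,\phi'(x)=o(\phi'(x))$.

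There is one genuine gap in the backward step. You define $\epsilon(x)=\sup_{y\geq x}|\psi(y)-\phi(y)|/\phi(y)$, which controls the discrepancy only on $[x,\infty)$. For the forward quotient on $[x,x+h]$ this is exactly what is needed, but for the backward quotient on $[x-h,x]$ the relevant bound is $\epsilon(x-h)$, and since $\epsilon$ is non-increasing one only has $\epsilon(x-h)\geq\epsilon(x)$; nothing in your setup rules out $\epsilon(x-h)\gg\sqrt{\epsilon(x)}$, in which case the error term $\epsilon(x-h)\phi(x)/h$ is not $o(\phi'(x))$. So ``the identical argument applied on $[x-h,x]$'' does not literally go through with your $\epsilon$. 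The fix is minor: first observe that $h=o(x)$ (indeed $\phi(x)\leq\phi(x_0)+(x-x_0)\phi'(x)$ since $\phi'$ is increasing, so $\phi/\phi'\leq x+O(1)$, whence $h\leq\sqrt{\epsilon}\,(x+O(1))=o(x)$; this is also where the unboundedness of $\phi'$ enters to kill the $\phi(x_0)/\phi'(x)$ term), and then redefine $\epsilon(x):=\sup_{y\geq x/2}|\psi(y)-\phi(y)|/\phi(y)$, which still tends to $0$ and is non-increasing, and which now dominates the discrepancy on all of $[x-h,x+h]$ for large $x$. With that change your argument closes cleanly and does establish the lemma.
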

Note that l'Hospital's rule says that~\eqref{e6} implies~\eqref{e5}.
Lemma~\ref{lemma4} may be considered as a reversal of l'Hospital's rule.
For this an additional hypothesis such as~\eqref{e5a} is essential.

\begin{proof}[Proof of the theorem]
We denote the right hand side of~\eqref{e4} by $g(r)$.
Since $N(r)$ is convex in $\log r$ we see that $\psi(x)=N(e^x)$ is convex in~$x$.
It is easy to see that $\phi(x)=g(e^x)$ satisfies the hypothesis of Lemma~\ref{lemma4}.
In fact, it is not difficult to see that $\phi''(x)\phi(x)/\phi'(x)^2\to 1$ as $x\to\infty$.
We thus deduce from Lemma~\ref{lemma4} that $\phi'(x)\sim\psi'(x)$ and hence 
that $n(r)\sim r g'(r)$.
From this the conclusion follows easily using Lemma~\ref{lemma8}.
\end{proof}

\medskip

\noindent
Mathematisches Seminar \\
Christian--Albrechts--Universit\"at zu Kiel \\
Ludewig--Meyn--Stra{\ss}e 4 \\
24098 Kiel \\
Germany

\medskip

\noindent
E-mail address: {\tt bergweiler@math.uni-kiel.de}
\end{document}